\newtheorem{theorem}{Theorem}[section]
\newtheorem*{theorem*}{Theorem}
\newtheorem{proposition}[theorem]{Proposition}
\newtheorem{lemma}[theorem]{Lemma}
\theoremstyle{definition}
\newtheorem{definition}[theorem]{Definition}
\newtheorem{remark}[theorem]{Remark}
\theoremstyle{remark}
\newtheorem{example}[theorem]{Example}
\numberwithin{equation}{section}
\numberwithin{figure}{section}
\newcommand\R{\mathbb R}
\newcommand\C{\mathbb C}
\newcommand\F{\mathbb F}
\DeclareMathOperator\Tr{Tr}
\DeclareMathOperator\aff{aff}
\tikzset{
    triangle path/.style={decoration={triangle,amplitude=#1}, decorate},
    triangle path/.default=1ex}
\begin{document}

\title[Polytopes of eigensteps of finite equal norm tight frames]{Polytopes of eigensteps\\of finite equal norm tight frames}

\author{Tim Haga}
\address{Department of Mathematics, University of Bremen, 28359 Bremen, Germany}
\email{timhaga@math.uni-bremen.de}

\author{Christoph Pegel}
\address{Department of Mathematics, University of Bremen, 28359 Bremen, Germany}
\email{pegel@math.uni-bremen.de}

\begin{abstract}
 Hilbert space frames generalize orthonormal bases to allow redundancy in representations of vectors while keeping good reconstruction properties. A frame comes with an associated frame operator encoding essential properties of the frame. We study a polytope that arises in an algorithm for constructing all finite frames with given lengths of frame vectors and spectrum of the frame operator, which is a Gelfand-Tsetlin polytope. For equal norm tight frames, we give a non-redundant description of the polytope in terms of equations and inequalities. From this we obtain the dimension and number of facets of the polytope.
While studying the polytope, we find two affine isomorphisms and show how they relate to operations on the underlying frames.
\end{abstract}
\maketitle

\section{Introduction}

Eigensteps have been introduced by Cahill, Fickus, Mixon, Poteet and Strawn in \cite{CFM11} to construct all finite frames of a given spectrum and set of lengths. The results have been adopted in \cite{FMP11} to obtain an algorithm to construct all self-adjoint matrices with prescribed spectrum and diagonal. The existence of such matrices is given by the Schur-Horn~Theorem. The fact that eigensteps form a polytope, and therefore a path-connected set, has been used in \cite{CMS13} to obtain connectivity and irreducibility results for algebraic varieties of finite unit norm tight frames. Parametrizing this polytope is crucial to apply the algorithms described in \cite{CFM11} and \cite{FMP11}.

In this paper, we consider the case of equal norm tight frames, where the describing equations and inequalities of the polytope of eigensteps can be drastically simplified. To be precise, we give a description of the polytope where the remaining inequalities are in one-to-one correspondence with the facets of the polytope and the remaining equations are linearly independent.

We start with the necessary preliminaries in \Cref{sec:preliminaries} in order to study the polytope of eigensteps in a purely combinatoric manner in \Cref{sec:dimension,sec:facets}. We give formulae for the dimension of the polytope and its number of facets:
\begin{theorem*}
    Let $\Lambda_{N,d}$ be the polytope of eigensteps of equal norm tight frames of $N$ vectors in a $d$-dimensional Hilbert space.
    \begin{enumerate}
        \item The dimension of $\Lambda_{N,d}$ is $0$ for $d=0$ and $d=N$, otherwise
            \begin{equation*}
                \dim(\Lambda_{N,d})=(d-1)(N-d-1).
            \end{equation*}
        \item For $2\le d\le N-2$ the number of facets of $\Lambda_{N,d}$ is
            \begin{equation*}
                d(N-d-1)+(N-d)(d-1)-2.
            \end{equation*}
    \end{enumerate}
\end{theorem*}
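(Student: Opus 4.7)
The plan is to work directly with the Gelfand-Tsetlin description of $\Lambda_{N,d}$: view eigensteps as a triangular array $(\lambda_{i,n})$ for $1 \le i \le d$ and $0 \le n \le N$, satisfying the boundary conditions $\lambda_{i,0} = 0$ and $\lambda_{i,N} = N/d$, the trace equations $\sum_{i=1}^d \lambda_{i,n} = n$, and the Cauchy interlacing $\lambda_{i,n} \ge \lambda_{i,n-1} \ge \lambda_{i+1,n}$. The first reduction is to identify and substitute away coordinates that are forced to be constant on $\Lambda_{N,d}$: rank considerations at the $n$-th partial frame operator force $\lambda_{i,n} = 0$ for $i > n$, and symmetrically the tight condition, viewed from the ``top'' downward, forces $\lambda_{i,n} = N/d$ for $i \le n - (N-d)$. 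After these substitutions a parallelogram-shaped region of $d(N-d)$ genuinely free coordinates remains; I expect the symmetry $d \leftrightarrow N-d$, which preserves both claimed formulas, to be realized by one of the affine isomorphisms flagged in the abstract.

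For part~(1), the trace equations at $n = 1, \ldots, N-1$ give $N-1$ linear conditions on the free coordinates. I would prove their linear independence by noting that the equation at step $n$ involves at least one free variable that does not appear in any subsequent equation (an upper-triangular structure on the coefficient matrix), yielding the upper bound
\begin{equation*}
    \dim(\Lambda_{N,d}) \le d(N-d) - (N-1) = (d-1)(N-d-1).
\end{equation*}
To attain equality I would exhibit a point in the relative interior, i.e.\ one at which every interlacing inequality involving only free coordinates is strict. Such a point is produced, for example, from a generic equal norm tight frame (whose existence is classical and also discussed in \cite{CFM11}); strictness of interlacing is a generic condition and can be arranged by a small perturbation.

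For part~(2), after discarding the trivially satisfied interlacing inequalities involving forced coordinates, the candidate facet inequalities split into a ``vertical'' family $\lambda_{i,n} \ge \lambda_{i,n-1}$ and a ``diagonal'' family $\lambda_{i,n-1} \ge \lambda_{i+1,n}$. A bookkeeping over the parallelogram of free coordinates and its staircase boundary with the two forced regions yields $d(N-d-1)$ non-trivial inequalities of the first kind and $(N-d)(d-1)$ of the second. I then expect exactly two of these to be redundant, one near each corner where the forced-zero region (respectively forced-$N/d$ region) first meets the free region; at such a corner an interlacing inequality together with the adjacent trace equation reduces to an equality of already-fixed values, making it implied. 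Subtracting gives the claimed number of facets, and for every remaining inequality I would verify facet-definition by exhibiting a point of $\Lambda_{N,d}$ at which it is the unique tight inequality.

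The main obstacle will be the facet bookkeeping: pinpointing the exactly two redundant inequalities and verifying that all others are genuinely facet-defining requires careful case analysis at the two corners where forced and free regions meet, and a uniform construction of witness points for the remaining inequalities. The dimension formula, by contrast, reduces to a routine linear-algebra count once the forced coordinates have been correctly identified and the trace equations shown to be independent.
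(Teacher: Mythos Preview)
Your plan for part~(1) is essentially the paper's approach: identify the two triangular regions of forced entries, count the remaining $d(N-d)$ ``free'' coordinates in the parallelogram, subtract the $N-1$ independent trace equations, and then certify the upper bound is attained by exhibiting a relative interior point. The only difference is that the paper writes down an explicit interior point $\widehat\lambda$ with $\widehat\lambda_{i,n}=d+n-2i+1$, whereas you appeal to a generic frame. Either works for part~(1), but the explicit point is what the paper reuses throughout part~(2), so your choice costs you later.

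Part~(2) has a genuine gap: your bookkeeping is off in two ways that happen to cancel numerically. First, when you discard inequalities involving forced coordinates you throw away the two boundary inequalities $\lambda_{d,d}\ge 0$ and $\lambda_{1,N-d}\le N$ (in your normalization, $N/d$). These are \emph{not} trivially satisfied and are in fact facet-defining; the paper keeps them explicitly and constructs witness points violating only these. Second, there are not two redundant inequalities but four: near the lower-left corner \emph{both} $\lambda_{2,2}\le\lambda_{1,1}$ and $\lambda_{1,1}\le\lambda_{1,2}$ are implied (each is equivalent to $\lambda_{2,2}\le d$, which follows from $\lambda_{2,2}\le\lambda_{2,3}\le\lambda_{1,2}=2d-\lambda_{2,2}$), and symmetrically two more near the upper-right corner. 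So the correct count is $d(N-d-1)+(d-1)(N-d)+2-4$, not $\ldots-2$. Your description of the redundancy mechanism (``reduces to an equality of already-fixed values'') is also not what actually happens: the redundant inequalities are implied by \emph{other} interlacing inequalities combined with the trace equation, not by forced values alone.

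Finally, your plan to ``exhibit a point at which each remaining inequality is the unique tight one'' is the right idea but underspecified without the explicit $\widehat\lambda$. The paper carries this out by local modifications of $\widehat\lambda$ (shifting four entries by $\pm2$ in a square pattern to break exactly one inequality while preserving all column sums), and uses the two affine isomorphisms $\Phi_{N,d}$ and $\Psi_{N,d}$ to halve the casework. Without a concrete interior point and without these symmetries, the witness construction---which you correctly flag as the main obstacle---does not go through.
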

This theorem appears as \Cref{thm:dimension} and \Cref{thm:nfacets}, respectively.
In \Cref{sec:frame-connections} we return to frame theory and describe how the affine isomorphisms of polytopes we obtained combinatorially are described by reversing the order of frame vectors and taking Naimark complements. We end with \Cref{sec:conclusion}, where we discuss our results and some open questions.

\section{Preliminaries}
\label{sec:preliminaries}

Given a finite dimensional real or complex Hilbert space $\mathcal H$ of dimension $d$, a frame is just a spanning set $F=\left(f_n\right)_{n=1}^N$ of $\mathcal H$. By a slight abuse of notation, we identify $F$ with the $d\times N$ matrix having as columns the coordinates of the frame vectors $f_1,\ldots,f_N$ with respect to some orthonormal basis of $\mathcal H$. Since any finite dimensional Hilbert space is isomorphic to $\R^d$ or $\C^d$ by a choice of an orthonormal basis, we assume $\mathcal H = \F^d$ where $\F=\C$ or $\R$ and use coordinates with respect to the standard basis. A frame $F=\left(f_n\right)_{n=1}^N$ comes with an associated \emph{frame operator} $T\colon \mathcal H\to\mathcal H$ given by $Tv = \sum_{n=1}^N \langle v, f_n\rangle f_n$. Let $F^*$ denote the conjugate transpose of the matrix $F$, then the frame operator is given by $FF^*$. A frame is called \emph{equal norm} if $\|f_n\|^2=\mu$ is the same for all frame vectors, \emph{tight} if its frame operator is a multiple of the identity, and \emph{Parseval} if its frame operator is equal to the identity. When $F$ is a finite equal norm tight frame, we have $FF^* = \frac{N\mu}{d} \cdot I_d$, where $I_d$ is the $d\times d$ identity matrix. We refer to \cite{CK13} for a detailed introduction and collection of recent results in finite frame theory.

The problem discussed in \cite{CFM11} is the following: given a non-increasing sequence of norm-squares $(\mu_n)_{n=1}^N$ and a non-increasing, non-negative spectrum $(\lambda_i)_{i=1}^d$, find all matrices $F=(f_n)_{n=1}^N$ such that $\|f_n\|^2 = \mu_n$ for all $n$ and $\sigma(FF^*)=(\lambda_i)_{i=1}^d$, where $\sigma$ denotes the non-increasing spectrum of an operator. To achieve this, the authors of \cite{CFM11} divide the task into two steps. First, find all possible sequences of spectra $((\lambda_{i,n})_{i=1}^d)_{n=0}^N$, such that there exists an $F$ with $\|f_n\|^2=\mu_n$ and $\sigma(F_n^{\vphantom{*}} F_n^*)=(\lambda_{i,n})_{i=1}^d$ for all $n$, where $F_n$ is $F$ truncated to the first $n$ columns. Any such sequence of spectra is called a \emph{valid sequence of eigensteps} for the given input data $(\mu_n)_{n=1}^N$ and $(\lambda_i)_{i=1}^d$. Then, for a given valid sequence of eigensteps, find all $F$ such that $\|f_n\|^2=\mu_n$ and $\sigma(F_n^{\vphantom{*}} F_n^*)=(\lambda_{i,n})_{i=1}^d$ for all $n$ by iteratively adding frame vectors following an elaborate algorithm.

Since $F_{n+1}^{\vphantom{*}} F_{n+1}^* = F_n^{\vphantom{*}} F_n^* + f_{n+1}^{\vphantom{*}} f_{n+1}^*$, a theorem by Horn and Johnson \cite[Section~4.3]{HJ85} states that the spectra of $F_n^{\vphantom{*}} F_n^*$ and $F_{n+1}^{\vphantom{*}} F_{n+1}^*$ \emph{interlace}. That is, when spectra are indexed in non-increasing order, we have
\begin{equation}
    \lambda_{d,n} \le \lambda_{d,n+1}
    \le
    \lambda_{d-1,n} \le \lambda_{d-1,n+1}
    \le \cdots \le
    \lambda_{2,n} \le \lambda_{2,n+1}
    \le
    \lambda_{1,n} \le \lambda_{1,n+1}.
    \label{eq:interlace}
\end{equation}
Furthermore, for $0\le n \le N$ we have
\begin{equation}
    \sum_{i=1}^d \lambda_{i,n} = \Tr(F_n^{\vphantom{*}} F_n^*) = \Tr(F_n^* F_n^{\vphantom{*}}) = \sum_{k=1}^n \|f_k\|^2 = \sum_{k=1}^n \mu_k.
    \label{eq:sums}
\end{equation}
The second equality in \eqref{eq:sums} follows from the invariance of the trace under cyclic permutations.

By Theorem~2 in \cite{CFM11}, conditions \eqref{eq:interlace} and \eqref{eq:sums} together with $\lambda_{i,0} = 0$ and $\lambda_{i,N} = \lambda_i$ for all $i$ completely characterize the valid sequences of eigensteps. Since all conditions are linear equations or linear inequalities, the valid sequences of eigensteps form a polytope $\Lambda((\mu_n)_{n=1}^N, (\lambda_i)_{i=1}^d)$ in $\R^{d\times (N+1)}$.

Note that this polytope coincides with the Gelfand-Tsetlin polytope introduced in~\cite{GT50}.
The corresponding polytope of Gelfand-Tsetlin patterns is obtained by padding the sequence $(\lambda_{i})_{i=1}^d$ with $N-d$ zeros and using it as the top row.
The conditions in \eqref{eq:sums} yield row sums of the Gelfand-Tsetlin patterns, see~\cite[Def.~1.2]{DM04}. 
However, we do not assume the sequences $(\mu_n)_{n=1}^N$ and $(\lambda_i)_{i=1}^d$ to be integral in general.

In this paper, we consider the case of equal norm tight frames, where $\mu_n = \mu$ for all $n$ and $FF^*=\frac{N\mu}{d} \cdot I_d$. In particular, this covers equal norm Parseval frames for $\mu=\frac{d}{N}$ and unit norm tight frames for $\mu=1$. To avoid fractions and increase readability, we discuss equal norm tight frames with $\mu=d$, hence $FF^* = N \cdot I_d$. By scaling, the results can of course be transferred to arbitrary finite equal norm tight frames.

Let $\Lambda_{N,d} \coloneqq \Lambda( (d)_{n=1}^N, (N)_{i=1}^d)$ denote the polytope of finite equal norm tight frames of size $N$ with norm-squares $\mu=d$. We arrive at the following combinatorial definition of the polytope of eigensteps:
\begin{definition} \label{def:eigensteps}
    For integers $0\le d\le N$, we define the polytope $\Lambda_{N,d}$ as the set of all matrices
    \begin{equation*}
        \lambda = \big( \lambda_{i,n} \big)_{\substack{1\le i\le d,\\0\le n\le N}} \in \R^{d\times(N+1)}
    \end{equation*}
    satisfying the following conditions:
    \begin{align}
        \vphantom{\sum_{i=1}^d}
        \lambda_{i,0} &= 0
        & \text{for} \quad & 1\le i \le d, \label{eq:def-0} \\
        \lambda_{i,N} &= N
        & \text{for} \quad & 1\le i \le d, \label{eq:def-N} \\
        \sum_{i=1}^d \lambda_{i,n} &= dn
        & \text{for} \quad & 0\le n \le N, \label{eq:def-sums} \\
        \lambda_{i,n} &\le \lambda_{i,n+1}
        & \text{for} \quad & 1\le i \le d,\enskip 0\le n < N, \label{ineq:def-horiz} \\
        \vphantom{\sum_{i=1}^d}
        \lambda_{i,n} &\le \lambda_{i-1,n-1}
        & \text{for} \quad & 1< i\le d,\enskip 0 < n \le N. \label{ineq:def-diag}
    \end{align}
    We will refer to \eqref{eq:def-0} and \eqref{eq:def-N} as the \emph{first} and \emph{last column conditions}, respectively. The equations in \eqref{eq:def-sums} are \emph{column sum conditions}, while \eqref{ineq:def-horiz} and \eqref{ineq:def-diag} will be referred to as the \emph{horizontal} and \emph{diagonal inequalities}, respectively, for reasons obvious from \Cref{fig:eigensteps}.
\end{definition}

\begin{figure}
    \centering
    \includegraphics[width=\textwidth]{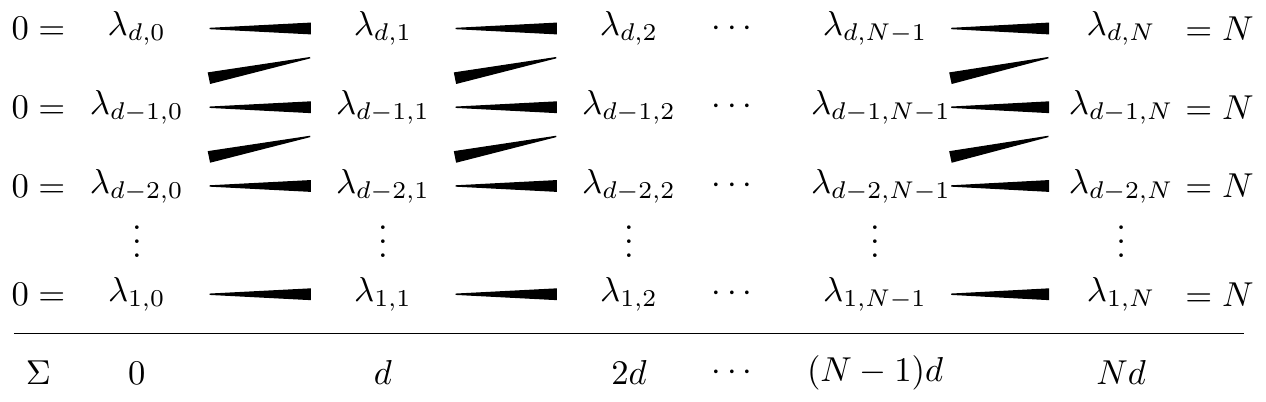}
    \caption[]{The conditions for a valid sequence of eigensteps for equal norm tight frames with $\mu=d$. A wedge%
       \tikz[baseline=-\the\dimexpr\fontdimen22\textfont2\relax]{
           \node (b) at (1.5,0) {$\strut\lambda_{k,l}$};
           \node[baseline=(b.base)] (a) at (0,0) {$\strut\lambda_{i,j}$};
           \draw [fill, triangle path=1.5pt] (a) -- (b);
       }%
   denotes an inequality $\lambda_{i,j}\le\lambda_{k,l}$.}
    \label{fig:eigensteps}
\end{figure}

\section{The dimension of polytopes of eigensteps of finite equal norm tight frames}\label{sec:dimension}

In this section we determine the dimension of $\Lambda_{N,d}$. The dimension of the solution set of a system of linear equations and inequalities can be computed from the number of variables and the number of linearly independent equations, including those arising from inequalities that are always satisfied with equality. Thus, the first step is to remove redundant equations and recognize inequalities that are always satisfied with equality.

\begin{proposition}\label{prop:eigensteps}
    A matrix $(\lambda_{i,n})\in\R^{d\times(N+1)}$ is a point of $\Lambda_{N,d}$ if and only if the following conditions are satisfied:
    \begin{align}
        \vphantom{\sum_{i=1}^d}
        \lambda_{i,n} &= 0
        & \text{for} \quad & i > n, \label{eq:prop1-0} \\
        \lambda_{i,n} &= N
        & \text{for} \quad & i < n+d-N+1, \label{eq:prop1-N} \\
        \sum_{i=1}^d \lambda_{i,n} &= dn
        & \text{for} \quad & 0< n < N, \label{eq:prop1-sums} \\
        \lambda_{i,n} &\le \lambda_{i,n+1}
        & \text{for} \quad & 1\le i \le d,\enskip i\le n < N-d+i-1, \label{ineq:prop1-horiz} \\
        \vphantom{\sum_{i=1}^d}
        \lambda_{i,n} &\le \lambda_{i-1,n-1}
        & \text{for} \quad & 1< i\le d,\enskip i \le n < N-d+i, \label{ineq:prop1-diag} \\
        \lambda_{d,d} &\ge 0, \label{ineq:prop1-lb} \\
        \vphantom{\sum_{i=1}^d}
        \lambda_{1,N-d} &\le N. \label{ineq:prop1-ub}
    \end{align}
\end{proposition}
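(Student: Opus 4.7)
The plan is to prove this as an equivalence between the two systems of linear conditions on $(\lambda_{i,n})$. The main conceptual tool is a case analysis based on where each entry is forced by \eqref{eq:prop1-0} and \eqml{eq:prop1-N}. I will partition the index positions $(i,n)$ into the \emph{zero region} $\{i > n\}$, the \emph{saturated region} $\{i \le n + d - N\}$, and the \emph{free region} in between. A short calculation shows that the ranges kept in \eqref{ineq:prop1-horiz} and \eqref{ineq:prop1-diag} are exactly the positions where both endpoints of the corresponding original inequality lie in the free region.

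For the forward implication, I show that a point $\lambda \in \Lambda_{N,d}$ satisfies the reduced conditions. The identities \eqref{eq:prop1-0} and \eqref{eq:prop1-N} follow from sandwich arguments. For $i > n$, I chain horizontal inequalities to get $0 = \lambda_{i,0} \le \cdots \le \lambda_{i,n}$ and chain diagonal inequalities to get $\lambda_{i,n} \le \lambda_{i-1,n-1} \le \cdots \le \lambda_{i-n,0} = 0$, forcing equality. The argument for \eqref{eq:prop1-N} is symmetric, using the last column as the anchor and chaining diagonals in the opposite direction; the hypothesis $i < n + d - N + 1$ is exactly the condition for $\lambda_{i,n} \ge \lambda_{i+1,n+1} \ge \cdots \ge \lambda_{i+N-n,N}$ to stay within the rows $1,\dots,d$. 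The remaining new conditions are literal restrictions of the original ones, and \eqref{ineq:prop1-lb}, \eqref{ineq:prop1-ub} are single-step horizontal inequalities to the first and last column respectively.

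For the converse, I recover the full defining system of \Cref{def:eigensteps} from \eqref{eq:prop1-0}--\eqref{ineq:prop1-ub}. The original first-column, last-column, and the two missing column-sum conditions at $n = 0, N$ follow immediately from the forced values \eqref{eq:prop1-0} and \eqref{eq:prop1-N}, since $\sum_i \lambda_{i,0} = 0$ and $\sum_i \lambda_{i,N} = dN$. Using the region case analysis, I verify each original horizontal and diagonal inequality by splitting on which region its endpoints lie in: both in the same forced region gives a trivial $0 \le 0$ or $N \le N$, and both in the free region is handled directly by \eqref{ineq:prop1-horiz} or \eqref{ineq:prop1-diag}. A check of index ranges shows that for diagonal inequalities these exhaust all cases, since the endpoints $(i,n)$ and $(i-1,n-1)$ always lie in the same region. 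For horizontal inequalities two genuine boundary cases remain: at $n = i-1$ I need $\lambda_{i,i} \ge 0$, which I derive by chaining $\lambda_{d,d} \le \lambda_{d-1,d-1} \le \cdots \le \lambda_{i,i}$ via \eqref{ineq:prop1-diag} and applying \eqref{ineq:prop1-lb}; at $n = N - d + i - 1$ I need $\lambda_{i, N-d+i-1} \le N$, which I derive by chaining $\lambda_{i, N-d+i-1} \le \lambda_{i-1, N-d+i-2} \le \cdots \le \lambda_{1, N-d}$ via \eqref{ineq:prop1-diag} and applying \eqref{ineq:prop1-ub}.

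The main obstacle is the routine but fiddly bookkeeping to confirm that every diagonal link used in the two chain arguments falls within the range $i \le n < N - d + i$ of \eqref{ineq:prop1-diag}; this reduces to checking that the relevant indices $(j,j)$ for $i < j \le d$ and $(j, N-d+j-1)$ for $1 < j \le i$ satisfy the range condition whenever $N > d$. The degenerate cases $d = 0$ and $d = N$, where the free region is empty and every entry is forced, have to be dispatched separately but are immediate.
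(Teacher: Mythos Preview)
Your proof is correct and follows essentially the same strategy as the paper's: derive the two fixed triangles by sandwiching between chains of horizontal and diagonal inequalities anchored at the first and last columns, then recover the omitted original inequalities by showing that the lower and upper bound conditions \eqref{ineq:prop1-lb}, \eqref{ineq:prop1-ub} propagate along the diagonal edge of the parallelogram (your chains $\lambda_{d,d}\le\lambda_{d-1,d-1}\le\cdots$ and $\lambda_{i,N-d+i-1}\le\cdots\le\lambda_{1,N-d}$). The paper's write-up is a terse sketch of exactly this argument; you have simply made the region partition and the two boundary cases for horizontal inequalities explicit, and verified that the diagonal links used in the chains fall within the index range of \eqref{ineq:prop1-diag}. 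One small wording slip: \eqref{ineq:prop1-lb} and \eqref{ineq:prop1-ub} are not literally horizontal inequalities ``to the first and last column'' but rather to the adjacent entry in the zero (respectively $N$-) triangle; the mathematics is unaffected.
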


\begin{proof}
    The idea behind the proof is to use the first and last column conditions together with the horizontal and diagonal inequalities to obtain triangles in the eigenstep tableaux that consist of fixed $0$- or $N$-entries. Using those fixed triangles we can drop many of the now redundant inequalities from the system in \Cref{def:eigensteps}. The remaining inequalities form a parallelogram with two legs as depicted in \Cref{fig:eigenstep-triangles}.

    We first prove the necessity of the modified conditions. The triangles described by \eqref{eq:prop1-0} and \eqref{eq:prop1-N}---from now on referred to as the two \emph{triangle conditions}, see \Cref{fig:eigenstep-triangles} for reference---are an immediate consequence of the first and last column conditions together with the horizontal and diagonal inequalities. The remaining equations and inequalities already appear as part of the definition of $\Lambda_{N,d}$.

    To prove sufficiency, we first see that the first and last column conditions are implied by the triangle conditions. The first and last column are always fixed, so the column sum conditions can be weakened to \eqref{eq:prop1-sums}. Condition \eqref{ineq:prop1-lb} together with the weakened horizontal and diagonal inequalities \eqref{ineq:prop1-horiz} and \eqref{ineq:prop1-diag} is enough to guarantee that all $\lambda_{i,n}$ are non-negative. Thus, we will refer to \eqref{ineq:prop1-lb} as the \emph{lower bound condition}. Similarly \eqref{ineq:prop1-ub} guarantees $\lambda_{i,n}\le N$ for all entries and will be referred to as the \emph{upper bound condition}. Hence, from the original horizontal and diagonal inequalities \eqref{ineq:def-horiz} and \eqref{ineq:def-diag} we only need those involving solely entries outside of the $0$- and $N$-triangles.
\end{proof}
The remaining inequalities required by \Cref{prop:eigensteps} are  depicted in \Cref{fig:eigenstep-triangles}.
Note that \Cref{prop:eigensteps} holds only for equal norm tight frames, in particular \eqref{eq:prop1-N} is false for frames
which are not tight. 

\begin{figure}
    \centering
    \includegraphics{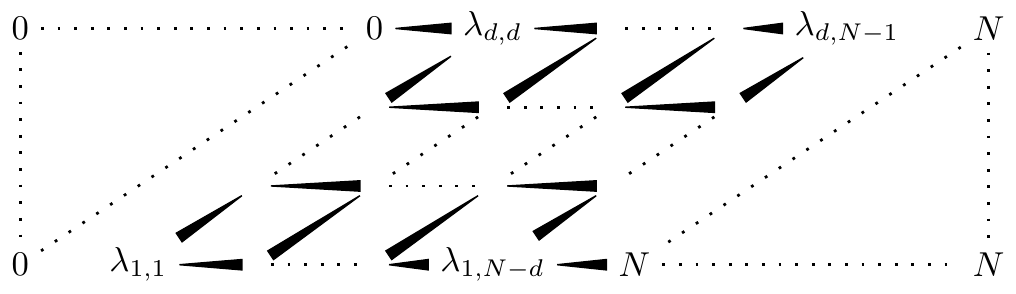}
    \caption{The modified conditions for a valid sequence of eigensteps for equal norm tight frames with only the inequalities required by \Cref{prop:eigensteps}.}
    \label{fig:eigenstep-triangles}
\end{figure}

With the modified conditions from \Cref{prop:eigensteps} we are now able to compute the dimension of $\Lambda_{N,d}$.

\begin{theorem} \label{thm:dimension}
    The dimension of the polytope $\Lambda_{N,d}$ is $0$ for $d=0$ and $d=N$, otherwise \[\dim(\Lambda_{N,d})=(d-1)(N-d-1).\]
\end{theorem}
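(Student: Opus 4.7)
The plan is to combine a dimension count with an existence argument for a relative interior point. First dispose of the boundary cases. For $d = 0$ the matrix is empty; for $d = N$ the two triangles $\{\lambda_{i,n} = 0 : i > n\}$ and $\{\lambda_{i,n} = N : i \le n\}$ from \Cref{prop:eigensteps} together cover every entry, so $\Lambda_{N,d}$ is a single point.

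Assume now $0 < d < N$. Of the $d(N+1)$ entries of the matrix, the triangle conditions \eqref{eq:prop1-0} and \eqref{eq:prop1-N} fix $d(d+1)/2$ entries to $0$ and another $d(d+1)/2$ entries to $N$. These two index sets are disjoint, because $i > n$ together with $i \le n + d - N$ would force $d > N$. Hence $d(N-d)$ free entries remain. The column-sum conditions \eqref{eq:prop1-sums} contribute $N-1$ further equations, one per column $n \in \{1,\dots,N-1\}$; these are linearly independent since their supports lie in distinct columns, and each such column contains at least one free entry (again using $d < N$), so each column-sum condition genuinely constrains the free variables. This yields the upper bound
\[
    \dim(\Lambda_{N,d}) \le d(N-d) - (N-1) = (d-1)(N-d-1).
\]

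For the matching lower bound I need to exhibit a point $\lambda^{\star} \in \Lambda_{N,d}$ at which every inequality \eqref{ineq:prop1-horiz}, \eqref{ineq:prop1-diag}, \eqref{ineq:prop1-lb}, and \eqref{ineq:prop1-ub} is strict; any such $\lambda^{\star}$ lies in the relative interior of the affine hull computed above, forcing equality in the bound. I expect this to be the main obstacle. Rather than writing down a single closed-form $\lambda^{\star}$ and verifying each strict inequality by hand, the cleanest route is via convexity: $\Lambda_{N,d}$ is non-empty (a concrete equal norm tight frame, e.g.\ a harmonic frame from the DFT, provides a point), and for each of the finitely many inequalities one can perturb within the affine hull to render that inequality strict; a convex combination of these witnesses then yields a single $\lambda^{\star}$ with every inequality strict simultaneously. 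Alternatively, one may write down an explicit $\lambda^{\star}$ using the reparametrization $(i,n) = (i, i+j)$ of the free region with $1 \le i \le d$ and $0 \le j \le N - d - 1$, chosen to be strictly decreasing in $i$, strictly increasing in $j$, and to have the correct column sums. Combining the two bounds gives $\dim(\Lambda_{N,d}) = (d-1)(N-d-1)$.
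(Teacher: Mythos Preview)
Your upper bound is correct and matches the paper's argument exactly: count the $d(N+1)$ entries, subtract the $d(d+1)$ entries fixed by the two disjoint triangles, then subtract the $N-1$ independent column-sum constraints.

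The gap is in your lower bound. The convexity argument, as you have written it, does not go through. The trick of averaging witnesses to obtain a single point with all inequalities strict requires each witness to lie in the \emph{polytope}: if $p_i\in\Lambda_{N,d}$ satisfies inequality $i$ strictly, then every inequality $j$ is non-strict at each $p_i$ and strict at $p_j$, so a positive convex combination is strict everywhere. But ``perturb within the affine hull'' does not produce points of $\Lambda_{N,d}$; a perturbation that loosens one tight inequality may violate another. And if you instead claim that for each inequality there is a point of $\Lambda_{N,d}$ where it is strict, that is precisely the statement that no inequality collapses to an equation on the polytope---which is what you are trying to prove. Non-emptiness of $\Lambda_{N,d}$ alone (e.g.\ via a harmonic frame) gives you nothing here: a priori the harmonic-frame eigensteps could sit on every facet simultaneously.

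Your ``alternatively'' is the right instinct, and it is what the paper actually does: write down an explicit interior point. The paper takes
\[
    \widehat\lambda_{i,n} = d + n - 2i + 1 \qquad\text{for } i \le n \le N-d+i-1,
\]
so that consecutive horizontal and diagonal differences are exactly $1$, $\widehat\lambda_{d,d}=1>0$, $\widehat\lambda_{1,N-d}=N-1<N$, and the column sums are $dn$ by a short case check. In your $(i,j)$-coordinates with $n=i+j$ this is simply $\widehat\lambda_{i,i+j}=d+j-i+1$, which is visibly strictly decreasing in $i$ and strictly increasing in $j$ as you anticipated. Carrying this out is the missing half of your proof.
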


\begin{proof}
    For $d=0$ the only point in $\Lambda_{N,d}$ is the empty $0\times (N+1)$ matrix, hence $\dim(\Lambda_{N,0})=0$. For $d=N$, the $0$- and $N$-triangles fill up the whole matrix. Thus, $\Lambda_{N,N}$ also consists of a single point and $\dim(\Lambda_{N,N})=0$.
    
    Otherwise, the triangle and sum conditions given by \eqref{eq:prop1-0}, \eqref{eq:prop1-N} and \eqref{eq:prop1-sums} are linearly independent. Thus, by counting the equations, we obtain
    \begin{align*}
        \dim(\Lambda_{N,d}) &\le d(N+1) - 2\cdot \frac{d(d+1)}2 - (N-1)
        = (d-1)(N-d-1).
    \end{align*}
    To verify $\dim(\Lambda_{N,d}) \ge (d-1)(N-d-1)$, we show that $\Lambda_{N,d}$ contains a \emph{special point} $\widehat\lambda$ that satisfies all the inequalities \eqref{ineq:prop1-horiz} to \eqref{ineq:prop1-ub} \emph{strictly}, with the difference between the left and right hand sides of each inequality being equal to $1$. The entries of $\widehat\lambda$ not fixed by the triangle conditions are given by
    \begin{equation}
        \label{eq:lambdahat}
        \widehat\lambda_{i,n} \coloneqq d+n-2i+1 \quad\text{for}\quad i\le n\le N-d+i-1.
    \end{equation}
    See \Cref{ex:lambdahat} for reference. The smallest value in \eqref{eq:lambdahat} is $\widehat\lambda_{d,d}=1$, the largest is $\widehat\lambda_{1,N-d}=N-1$, so the lower and upper bound conditions are strictly satisfied. The horizontal and diagonal inequalities hold strictly as well, since
    \begin{align*}
        \widehat\lambda_{i,n} = d+n-2i+1 &< d+(n+1)-2i+1 = \widehat\lambda_{i,n+1}, \\
        \widehat\lambda_{i,n} = d+n+2i+1 &< d+(n-1)-2(i-1)+1 = \widehat\lambda_{i-1,n-1}.
    \end{align*}
    It remains to verify the column sum conditions \eqref{eq:prop1-sums}. Letting $i_0\coloneqq\max\{0,n+d-N\}$ and $i_1\coloneqq\min\{d,n\}$ we have
    \begin{align*}
        \sum_{i=1}^d \widehat\lambda_{i,n} &= \sum_{i=1}^{i_0} N + \sum_{i=i_0+1}^{i_1} \widehat\lambda_{i,n} + \sum_{i=i_1+1}^d 0 \\
       &= i_0 N + \sum_{i=i_0+1}^{i_1} (d+n-2i+1) \\
       &= i_0 N + (i_1-i_0)(d+n-i_1-i_0).
   \end{align*}
   In all four cases, this expression evaluates to $dn$. 
\end{proof}

\begin{example}    
    \label{ex:lambdahat}
    For $N=6$, $d=4$ we obtain the special point of $\Lambda_{6,4}$ as
    \begin{equation*}
        \widehat\lambda =
        \begin{pmatrix}
            0 & 0 & 0 & 0 & 1 & 2 & 6 \\
            0 & 0 & 0 & 2 & 3 & 6 & 6 \\
            0 & 0 & 3 & 4 & 6 & 6 & 6 \\
            0 & 4 & 5 & 6 & 6 & 6 & 6
        \end{pmatrix}.
    \end{equation*}
    This tableau satisfies all inequalities in \Cref{prop:eigensteps} strictly while also satisfying the column sum and triangle conditions.
\end{example}

Note that the dimension of the polytope of eigensteps $\Lambda_{N,d}$ is related to the dimensions of certain frame varieties.

\begin{remark}
	Let $\mathcal{F}_{N,d}^\R\subseteq\R^{d\times N}$ be the real algebraic variety of real unit norm tight frames, $\mathcal{F}_{N,d}^\C\subseteq\C^{d\times N} = \R^{2(d\times N)}$ the real algebraic variety of complex unit norm tight frames.
	The orthogonal group $O(d)$ and the unitary group $U(d)$ act on $\mathcal{F}_{N,d}^\R$ and $\mathcal{F}_{N,d}^\C$, respectively.
	The dimensions of $\mathcal{F}_{N,d}^\R$ and $\mathcal{F}_{N,d}^\C$ as determined in~\cite[Prop.~5.5]{CMS13} are strictly greater than $\dim(\Lambda_{N,d})$ for $N,d>0$.
	By Theorem~4.3 in~\cite{DS06}, this is also true for the real dimension of $\mathcal{F}_{N,d}^\C/U(d)$, while the dimension of $\mathcal{F}_{N,d}^\R/O(d)$ is in fact equal to $\dim(\Lambda_{N,d})$.
\end{remark}

\section{The facets of polytopes of finite equal norm tight frames}
\label{sec:facets}
In this section we investigate which of the remaining inequalities describing $\Lambda_{N,d}$ are necessary. In other words, we find the facet-describing inequalities of $\Lambda_{N,d}$. In particular, we obtain a formula for the number of facets.

To reduce the number of inequalities we need to consider separately, we use two kinds of dualities. One is an affine isomorphism between $\Lambda_{N,d}$ and $\Lambda_{N,N-d}$ that translates horizontal to diagonal inequalities and vice versa. The other is an affine involution on $\Lambda_{N,d}$, reversing the order of rows and columns of the eigenstep tableaux. We will see in \Cref{sec:frame-connections} how these dualities correspond to certain operations on equal norm tight frames.

From the proof of \Cref{thm:dimension} we know that the affine hull $\aff(\Lambda_{N,d})$ is the affine subspace of $\R^{d\times(N+1)}$ defined by the triangle and sum conditions (\eqref{eq:prop1-0}, \eqref{eq:prop1-N} and \eqref{eq:prop1-sums}).

\begin{proposition} \label{prop:psi}
    There is an affine isomorphism 
    \begin{equation*}
	    \Psi_{N,d}\colon \aff(\Lambda_{N,d}) \longrightarrow \aff(\Lambda_{N,N-d})
    \end{equation*}
    given by
    \begin{equation*}
        \left(\Psi_{N,d}(\lambda)\right)_{i,n} =
        \begin{cases}
            \lambda_{d+i-n, N-n}, & \text{for } i \le n \le d+i-1,\\
            0, & \text{for } n<i, \\
            N, & \text{for } n>d+i-1,
        \end{cases}
    \end{equation*}
    that restricts to an affine isomorphism $\Lambda_{N,d}\to \Lambda_{N,N-d}$.
\end{proposition}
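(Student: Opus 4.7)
The strategy is to verify, condition by condition, that $\Psi_{N,d}$ carries the description of $\aff(\Lambda_{N,d})$ from \Cref{prop:eigensteps} to the analogous description of $\aff(\Lambda_{N,N-d})$, and then that it carries the remaining inequalities to those of $\Lambda_{N,N-d}$. First I will check well-definedness: in the non-trivial region $\{(i,n) : i \le n \le d+i-1\}$ for the image (which is precisely the non-trivial region prescribed for $\Lambda_{N,N-d}$), the index $(d+i-n, N-n)$ is a valid entry of $\lambda$, and a short range check shows it lies in the non-trivial region of $\lambda$, namely $1 \le d+i-n \le d$ and $d+i-n \le N-n \le N-d+(d+i-n)-1$. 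Affinity is then immediate, since the formula is a linear re-indexing on the non-trivial region and constant on the rest.

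Second, I will verify that $\Psi_{N,d}(\lambda) \in \aff(\Lambda_{N,N-d})$ whenever $\lambda \in \aff(\Lambda_{N,d})$. The triangle conditions \eqref{eq:prop1-0} and \eqref{eq:prop1-N} for the image are built into the piecewise definition. For the column sum condition \eqref{eq:prop1-sums}, I will split $\sum_{i=1}^{N-d}(\Psi_{N,d}(\lambda))_{i,n}$ into the three regions, obtaining $i_0 N + \sum_j \lambda_{j, N-n}$ with $i_0 = \max(0, n-d)$ and the remaining sum taken over the non-trivial part of column $N-n$ of $\lambda$. Using the column sum $\sum_{j=1}^d \lambda_{j,N-n} = d(N-n)$ to replace this via $d(N-n) - j_0 N$ with $j_0 = \max(0, d-n)$, and the identity $i_0 - j_0 = n-d$ that holds in each sign case, the total simplifies to $n(N-d)$, which is the required column sum for $\Lambda_{N,N-d}$.

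Third, for the inequalities I will observe that $\Psi_{N,d}$ transposes the two families. Whenever $i \le n \le d+i-2$, the horizontal inequality $(\Psi_{N,d}(\lambda))_{i,n} \le (\Psi_{N,d}(\lambda))_{i,n+1}$ unfolds to $\lambda_{d+i-n,N-n} \le \lambda_{d+i-n-1,N-n-1}$, which is a diagonal inequality \eqref{ineq:prop1-diag} for $\lambda$; symmetrically, a diagonal inequality for $\Psi_{N,d}(\lambda)$ becomes a horizontal inequality for $\lambda$. The bound conditions match directly: $(\Psi_{N,d}(\lambda))_{N-d,N-d} = \lambda_{d,d}$ and $(\Psi_{N,d}(\lambda))_{1,d} = \lambda_{1,N-d}$, so \eqref{ineq:prop1-lb} and \eqref{ineq:prop1-ub} for the image coincide with those for $\lambda$. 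Hence $\Psi_{N,d}(\Lambda_{N,d}) \subseteq \Lambda_{N,N-d}$.

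Finally, to establish bijectivity I will exhibit $\Psi_{N,N-d}$ as the inverse by a direct substitution on the non-trivial region: the composition sends $(i,n)$ to $\lambda_{d + ((N-d)+i-n) - (N-n),\, N-(N-n)} = \lambda_{i,n}$, and in the forced regions both maps produce $0$ or $N$ identically. Symmetrically applying the previous paragraphs with $d$ and $N-d$ swapped shows $\Psi_{N,N-d}$ maps $\Lambda_{N,N-d}$ into $\Lambda_{N,d}$, completing the isomorphism. The main bookkeeping obstacle is handling the boundary cases in the column sum computation, specifically confirming $i_0 - j_0 = n-d$ uniformly across the four sign cases and that the non-trivial index ranges for column $n$ of $\Psi_{N,d}(\lambda)$ and column $N-n$ of $\lambda$ correspond exactly under the substitution $j = d+i-n$; the rest amounts to careful index tracking.
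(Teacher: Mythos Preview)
Your proposal is correct and follows essentially the same approach as the paper: verify the triangle conditions hold by construction, check the column sums by splitting into the three regions and reducing to the column sum of $\lambda$ in column $N-n$, observe that horizontal and diagonal inequalities interchange while the two bound conditions are preserved, and finish by showing $\Psi_{N,d}$ and $\Psi_{N,N-d}$ are mutual inverses. Your column-sum bookkeeping via the identity $i_0 - j_0 = n-d$ is a slightly tidier packaging of what the paper does by an explicit case computation, but the argument is the same.
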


As a map of eigenstep tableaux, $\Psi_{N,d}$ can be understood as interchanging rows and diagonals in the parallelogram of non-fixed entries while adjusting the sizes of $0$- and $N$-triangles. For example, when $N=5$, $d=3$ the map $\Psi_{5,3}\colon \aff(\Lambda_{5,3})\to\aff(\Lambda_{5,2})$ is given by
\begin{equation*}
    \Psi_{5,3}
    \begin{pmatrix}
        0 & 0 & 0 & \lambda_{3,3} & \lambda_{3,4} & 5 \\
        0 & 0 & \lambda_{2,2} & \lambda_{2,3} & 5 & 5 \\
        0 & \lambda_{1,1} & \lambda_{1,2} & 5 & 5 & 5
    \end{pmatrix}
    =
    \begin{pmatrix}
        0 & 0 & \lambda_{3,3} & \lambda_{2,2} & \lambda_{1,1} & 5 \\
        0 & \lambda_{3,4} & \lambda_{2,3} & \lambda_{1,2} & 5 & 5
    \end{pmatrix}.
\end{equation*}

In \Cref{fig:psi} we illustrate the general structure of the image of an eigenstep tableau under $\Psi_{N,d}$.

\begin{figure}[h]
    \centering
    \includegraphics{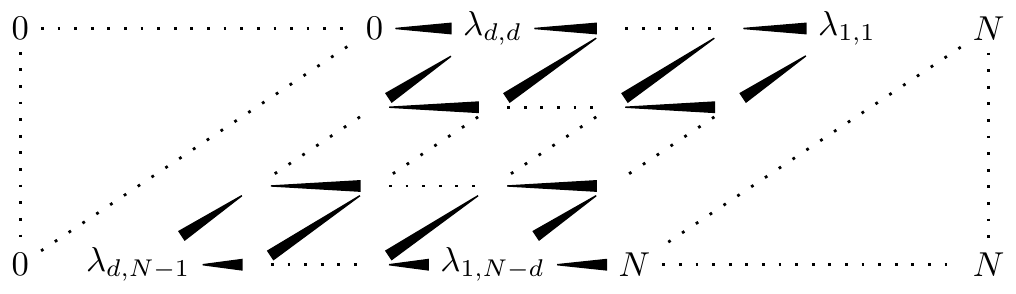}
    \caption{The image of a sequence of eigensteps $\lambda$ as in \Cref{fig:eigenstep-triangles} under the affine isomorphism $\Psi_{N,d}\colon \Lambda_{N,d}\to\Lambda_{N,N-d}$ from \Cref{prop:psi}.}
    \label{fig:psi}
\end{figure}

\begin{proof}
    We first need to verify that $\lambda' \coloneqq \Psi_{N,d}(\lambda)$ is a point in $\aff(\Lambda_{N,N-d})$ for $\lambda\in\aff(\Lambda_{N,d})$. The triangle conditions are satisfied by the definition of $\Psi_{N,d}$. To verify the sum conditions for $\Lambda_{N,N-d}$, let $1\le n \le N-1$ and $m\coloneqq N-n$, then
    \begin{align*}
        \sum_{i=1}^{N-d} \lambda'_{i,n} &=
        \smashoperator[r]{\sum_{i=1}^{\max\{0,n-d\}}} N \quad +
        \smashoperator[r]{\sum_{i=\max\{0,n-d\}+1}^{\min\{N-d,n\}}} \lambda_{d+i-n,N-n} \\
        &= \max\{0,n-d\} N + \smashoperator[r]{\sum_{j=\max\{0,m+d-N\}+1}^{\min\{d,m\}}} \lambda_{j,m} \\
        &= \max\{0,n-d\} N + dm - \max\{0,m+d-N\} N \\
        &= \max\{0,n-d\} N - \max\{0,d-n\} N + d(N-n) \\
        &= (n-d)N + d(N-n) = (N-d)n.
    \end{align*}
    To see that $\Psi_{N,d}$ restricts to an affine map $\Lambda_{N,d}\to\Lambda_{N,N-d}$ we need to consider all inequalities. Let $\lambda\in\Lambda_{N,d}$ and $\lambda'=\Psi_{N,d}(\lambda)$. The lower and upper bound conditions are satisfied, since $\lambda'_{N-d,N-d} = \lambda_{d,d}\ge 0$ and $\lambda'_{1,d} = \lambda_{1,N-d} \le N$. The remaining horizontal and diagonal inequalities \eqref{ineq:prop1-horiz} and \eqref{ineq:prop1-diag} interchange under $\Psi_{N,d}$. Let $j\coloneqq d+i-n$ and $m\coloneqq N-n$, then we have
    \begin{align*}
        \lambda'_{i,n} &\le \lambda'_{i,n+1}
        & \text{for} \quad & 1\le i \le N-d,\enskip i\le n < d+i-1\\
        \Leftrightarrow\quad \lambda_{j,m} &\le \lambda_{j-1,m-1}
        & \text{for} \quad & 1< j\le d,\enskip j \le m < N-d+j\\
        \intertext{and}
        \lambda'_{i,n} &\le \lambda'_{i-1,n-1}
        & \text{for} \quad & 1< i\le N-d,\enskip i \le n < d+i \\
        \Leftrightarrow\quad \lambda_{j,m} &\le \lambda_{j,m+1}
        & \text{for} \quad & 1\le j \le d,\enskip j\le m < N-d+j-1.
    \end{align*}
    Hence, $\Psi_{N,d}$ restricts to an affine map $\Lambda_{N,d}\longrightarrow\Lambda_{N,N-d}$. It is an isomorphism on both the affine hulls and the polytopes themselves, since $\Psi_{N,d}$ and $\Psi_{N,N-d}$ are mutually inverse. This needs to be checked only for the non-fixed entries:
    \begin{align*}
        \left(\Psi_{N,N-d}(\Psi_{N,d}(\lambda))\right)_{i,n} &=
        \left(\Psi_{N,d}(\lambda)\right)_{N-d+i-n, N-n} \\&=
        \lambda_{d+N-d+i-n-N+n, N-N+n} = \lambda_{i,n}, \\
        \left(\Psi_{N,d}(\Psi_{N,N-d}(\lambda))\right)_{i,n} &=
        \left(\Psi_{N,N-d}(\lambda)\right)_{d+i-n,N-n} \\&=
        \lambda_{N-d+d+i-n-N+n, N-N+n} = \lambda_{i,n}.\qedhere
    \end{align*}
\end{proof}

\begin{proposition} \label{prop:phi}
    There is an affine involution $\Phi_{N,d}\colon \R^{d\times(N+1)}\longrightarrow \R^{d\times(N+1)}$ given by
    \begin{equation*}
        \Phi(\lambda)_{i,n} = N - \lambda_{d-i+1, N-n},
    \end{equation*}
    that restricts to an affine involution $\Lambda_{N,d}\to\Lambda_{N,d}$.
\end{proposition}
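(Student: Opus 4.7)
The plan is to verify three things in order: that $\Phi_{N,d}$ is affine, that it is an involution, and that it maps $\Lambda_{N,d}$ into itself. Since an involution is automatically bijective, this will yield the claim.

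First, $\Phi_{N,d}$ is affine because each entry $\Phi(\lambda)_{i,n}$ is a constant minus a single entry of $\lambda$; in matrix form this is a permutation of coordinates composed with a sign flip and a translation by $N$. To check the involution property, I would simply compute
\begin{equation*}
    \Phi(\Phi(\lambda))_{i,n} = N - \Phi(\lambda)_{d-i+1,N-n} = N - \bigl(N - \lambda_{d-(d-i+1)+1,\, N-(N-n)}\bigr) = \lambda_{i,n}.
\end{equation*}

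Next, I would verify that $\Phi_{N,d}$ preserves every defining condition of $\Lambda_{N,d}$ from \Cref{def:eigensteps}. The first and last column conditions transform into each other: $\Phi(\lambda)_{i,0} = N - \lambda_{d-i+1,N} = 0$ and $\Phi(\lambda)_{i,N} = N - \lambda_{d-i+1,0} = N$. The column sum condition follows from
\begin{equation*}
    \sum_{i=1}^d \Phi(\lambda)_{i,n} = dN - \sum_{j=1}^d \lambda_{j,N-n} = dN - d(N-n) = dn,
\end{equation*}
after reindexing $j = d-i+1$ and using the sum condition at column $N-n$. For the inequalities, the horizontal inequality for $\Phi(\lambda)$ at position $(i,n)$ reduces after cancellation to the horizontal inequality for $\lambda$ at $(d-i+1,N-n-1)$, and the diagonal inequality for $\Phi(\lambda)$ at $(i,n)$ reduces to the diagonal inequality for $\lambda$ at $(d-i+2,N-n+1)$; so horizontal inequalities map to horizontal ones and diagonal to diagonal, in contrast with $\Psi_{N,d}$.

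The proof requires no real obstacle, as the map is almost tautologically symmetric: $\Phi_{N,d}$ rotates the tableau by $180^\circ$ (swapping rows top-to-bottom and columns left-to-right) and reflects values about $N/2$. The only minor care needed is in tracking indices through the reindexings; once these are done correctly, every condition either fixes a constant entry to its correct value or pairs up with its partner condition at the mirrored location.
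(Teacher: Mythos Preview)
Your proposal is correct and follows essentially the same approach as the paper: verify affinity, check the involution identity by direct substitution, and confirm that each defining condition of $\Lambda_{N,d}$ (first/last column, column sums, horizontal and diagonal inequalities) is preserved under $\Phi_{N,d}$. The only cosmetic differences are that the paper checks preservation before the involution identity and handles the inequalities with the one-line observation that $\lambda_{i,n}\le\lambda_{i',n'}$ is equivalent to $N-\lambda_{i',n'}\le N-\lambda_{i,n}$, whereas you spell out the index bookkeeping explicitly.
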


The involution $\Phi_{N,d}$ can be described as rotating the whole eigenstep tableau by $180\degree$ and subtracting every entry from $N$, as depicted in \Cref{fig:phi}.

\begin{figure}[b]
    \centering
	\includegraphics{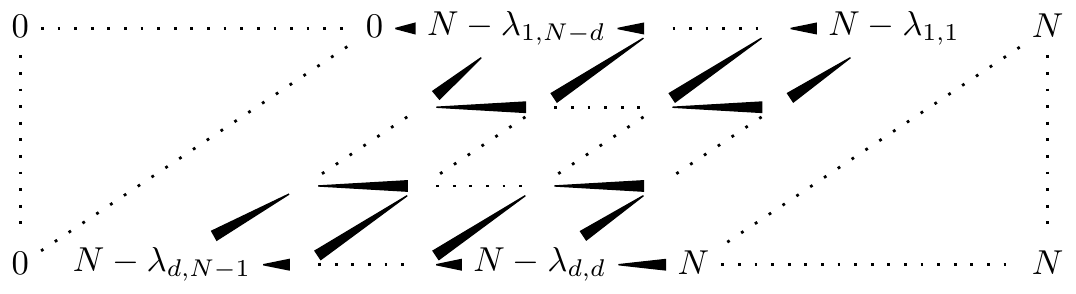}
    \caption{The image of a sequence of eigensteps $\lambda$ as in \Cref{fig:eigenstep-triangles} under the involution $\Phi_{N,d}$ from \Cref{prop:phi}.}
    \label{fig:phi}
\end{figure}

\begin{proof}
    It is clear that $\Phi_{N,d}$ is an affine map $\R^{d\times(N+1)}\to\R^{d\times(N+1)}$. We use the original system of equations and inequalities given in \Cref{def:eigensteps} to verify $\Phi(\lambda)\in \Lambda_{N,d}$ when $\lambda\in\Lambda_{N,d}$. For $n=0, N$ we obtain
    \begin{align*}
        \Phi(\lambda)_{i,0} = N - \lambda_{d-i+1, N} = N - N = 0, \\
        \Phi(\lambda)_{i,N} = N - \lambda_{d-i+1, 0} = N - 0 = N.
    \end{align*}
    Hence \eqref{eq:def-0} and \eqref{eq:def-N} are satisfied by $\lambda'$. The column sum conditions \eqref{eq:def-sums} are satisfied, since
    \begin{align*}
        \sum_{i=1}^d \Phi(\lambda)_{i,n} &= \sum_{i=1}^d \left(N - \lambda_{d-i+1, N-n}\right)\\
        &= \sum_{j=1}^d \left(N - \lambda_{j, N-n}\right)\\
        &= dN - d(N-n)\\
        &= dn.
    \end{align*}
    For the horizontal and diagonal inequalities, we observe that $\lambda_{i,n} \le \lambda_{i',n'}$ is equivalent to $N - \lambda_{i',n'} \le N - \lambda_{i,n}$.

    Finally, $\Phi_{N,d}$ is an involution on both $\R^{d\times(N+1)}$ and $\Lambda_{N,d}$, since
    \begin{align*}
        \big( (\Phi_{N,d} \circ \Phi_{N,d})(\lambda)\big)_{i,n} &= N - \big(\Phi_{N,d}(\lambda)\big)_{d-i+1,N-n}\\
        &= N - (N - \lambda_{i,n})\\
        &= \lambda_{i,n}.\qedhere
    \end{align*}
\end{proof}

The results noted in the following remark are easily verified by direct computation.

\begin{remark}
    The special point $\widehat\lambda$ of $\Lambda_{N,d}$ is fixed under $\Phi_{N,d}$ and mapped to the special point of $\Lambda_{N,N-d}$ by $\Psi_{N,d}$. Furthermore, $\Phi$ and $\Psi$ commute. To be precise:
    \begin{equation*}
        \Phi_{N,N-d} \circ \Psi_{N,d} = \Psi_{N,d} \circ \Phi_{N,d}.
    \end{equation*}
\end{remark}

Using the dualities given by $\Phi$ and $\Psi$, we now construct points that witness the necessity of most of the inequalities in \Cref{prop:eigensteps}.

\begin{lemma} \label{lem:failineq}
    Let $N\ge 5$, $2\le d\le N-2$. Consider one of the inequalities in \eqref{ineq:prop1-horiz} to \eqref{ineq:prop1-ub} which is not $\lambda_{2,2} \le \lambda_{1,1}$, $\lambda_{1,1}\le\lambda_{1,2}$, $\lambda_{d,N-2}\le\lambda_{d,N-1}$ or $\lambda_{d,N-1}\le\lambda_{d-1,N-2}$. Then there is a point in $\R^{d\times(N+1)}$ satisfying all conditions of \Cref{prop:eigensteps} except the considered inequality. 
\end{lemma}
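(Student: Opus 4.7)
The plan is to start from the special point $\widehat\lambda$ of \Cref{thm:dimension}, which satisfies every inequality of \Cref{prop:eigensteps} with slack exactly $1$, and, for each non-excluded inequality, exhibit a perturbation $v\in\R^{d\times(N+1)}$ supported on the non-fixed entries of the tableau satisfying three conditions: (a) $\sum_i v_{i,n}=0$ for all $n$, so that the column sum conditions are preserved; (b) the slack of the targeted inequality decreases by more than $1$, so that the inequality fails; and (c) the slack of every other inequality decreases by at most $1$, so that it remains (possibly weakly) satisfied. Because the triangle conditions are automatic on non-fixed entries and the bound conditions are just the extremal horizontal inequalities against the fixed triangles, these three conditions will suffice.

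Before the case analysis, I would use the symmetries to cut down the work. The involution $\Phi_{N,d}$ fixes $\widehat\lambda$, so a perturbation violating an inequality $\alpha$ produces via $\Phi_{N,d}$ a perturbation violating its $180^\circ$ rotation. The isomorphism $\Psi_{N,d}$ sends the special point of $\Lambda_{N,d}$ to that of $\Lambda_{N,N-d}$ and interchanges horizontal with diagonal inequalities. After applying these reductions only two families remain: the upper bound condition $\lambda_{1,N-d}\le N$ and the non-excluded horizontal inequalities $\lambda_{i,n}\le\lambda_{i,n+1}$.

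For a horizontal inequality I would set $v_{i,n}=1$ and $v_{i,n+1}=-1$ and look for an auxiliary row $i'\in\{i-1,i+1\}$ in which both $(i',n)$ and $(i',n+1)$ are non-fixed; then $v_{i',n}=-1$ and $v_{i',n+1}=1$ gives a compact $2\times 2$ block whose effect on every neighbouring horizontal, diagonal and bound inequality is a slack change of at most $\pm 1$. If no such adjacent row exists, which occurs precisely when the parallelogram of non-fixed entries is too narrow (e.g.\ when $d=N-2$ and $n=i$), I would instead use a zigzag: row $i-1$ to absorb the change in column $n$ and row $i+1$ to absorb it in column $n+1$ (or the reverse). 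For the upper bound condition I would raise $\widehat\lambda_{1,N-d}$ past $N$ and compensate within column $N-d$ (using row $2$ and, when available, row $3$ so that no horizontal inequality within the column is broken); when $d=2$ the column has too few non-fixed entries and the compensation forces a further adjustment one column to the left, to preserve $\lambda_{2,N-d-1}\le\lambda_{2,N-d}$.

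The main obstacle is verifying conditions (a)--(c) in the zigzag and cascading cases, which amounts to listing the at most a dozen affected inequalities around the support of $v$ and checking each slack. The exclusion of the four corner inequalities in the hypothesis is exactly what makes this possible: near each corner the column sum and triangle conditions force the corner inequality to be equivalent to another inequality in the list (for example, $\lambda_{2,2}\le\lambda_{1,1}$ and $\lambda_{1,1}\le\lambda_{1,2}$ both reduce to $\lambda_{2,2}\le d$ once $\lambda_{1,1}=d$ and $\lambda_{1,2}+\lambda_{2,2}=2d$ are imposed), so the two cannot be violated independently and no valid perturbation exists, while for all other inequalities the auxiliary rows needed for compensation are guaranteed to exist under the hypotheses $N\ge 5$ and $2\le d\le N-2$.
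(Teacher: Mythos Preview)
Your proposal is correct and follows essentially the same route as the paper: start from the special point $\widehat\lambda$, exploit the $\Phi_{N,d}$ and $\Psi_{N,d}$ symmetries to reduce to horizontal inequalities and a single bound condition, and then handle these by a local $2\times 2$ perturbation of $\widehat\lambda$ (with a separate treatment for the thin case $d=N-2$ and for the bound). The only organizational differences are that the paper perturbs the \emph{lower} bound (then obtains the upper via $\Phi_{N,d}$) and, for the thin parallelogram, passes via $\Psi_{N,d}$ to the diagonal inequalities of $\Lambda_{N,2}$ rather than using your direct zigzag; both variants work. One small omission to patch: your upper-bound perturbation using rows $2$ and $3$ in column $N-d$ also fails when $d=N-2$ (row $3$ is fixed there), not just when $d=2$, but this case is immediately covered by the $\Psi_{N,d}$ duality you have already set up, since $\Psi$ sends the upper bound of $\Lambda_{N,N-2}$ to that of $\Lambda_{N,2}$.
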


\begin{proof}
    The idea behind the proof is to start with the special point $\widehat\lambda \in \Lambda_{N,d}$ and locally change entries such that just one of the inequalities fails, while preserving all other conditions. Since $\Psi_{N,d}$ translates horizontal \eqref{ineq:prop1-horiz} to diagonal inequalities \eqref{ineq:prop1-diag} and vice versa, it is enough to consider only horizontal inequalities. Also, since $\Phi_{N,d}$ maps the top row ($i=d)$ to the bottom row ($i=1$), the inequalities in the bottom row do not need to be considered either. Since $\Phi_{N,d}$ maps the first diagonal ($i=n$) to the last ($i=n+d-N+1$) and vice versa, we do not need to consider the last horizontal inequality in each row. The remaining horizontal inequalities are treated with the following modification of $\widehat\lambda$:
    \begin{equation}
        \includegraphics{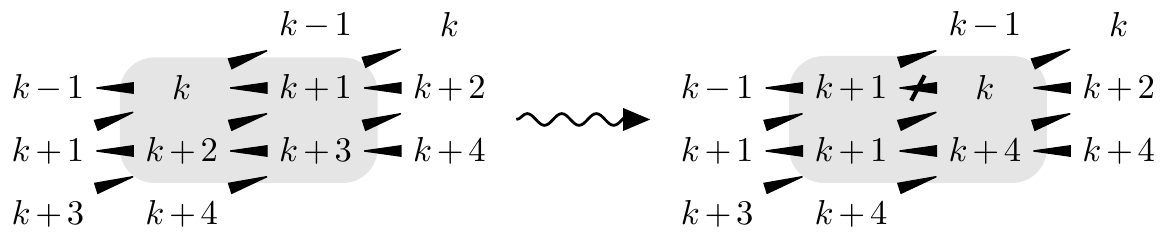}
        \label{eq:move-horiz}
    \end{equation}
    Note that this modification of $\widehat\lambda$ does not alter the column sums and causes only the slashed inequality in \eqref{eq:move-horiz} to fail.
    If $2\le d<N-2$ the square of modified entries in \eqref{eq:move-horiz} fits into the parallelogram of non-fixed entries.
\begin{figure}
    \centering
    \includegraphics{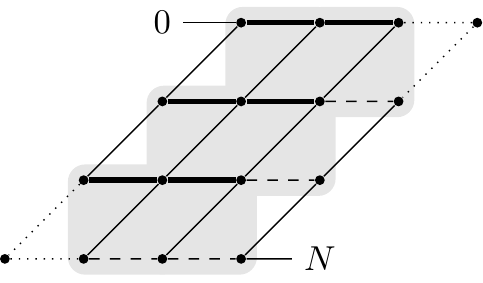}
    \caption{The horizontal inequalities treated by the modification \eqref{eq:move-horiz} shown in bold.}
    \label{fig:horiz-cover}
\end{figure}
In \Cref{fig:horiz-cover} we demonstrate how this modification can be used to obtain points that let each of the bold inequalities fail individually. The dashed inequalities are covered by the above argument using $\Phi_{N,d}$, while the dotted inequalities are the four exceptions mentioned in \Cref{lem:failineq}.

    If $d=N-2$, the parallelogram of non-fixed entries becomes too thin to fit the squares of \eqref{eq:move-horiz}, so this case has to be treated separately. Instead of considering the horizontal inequalities for $d=N-2$, we can use the duality given by $\Psi_{N,d}$ and consider the diagonal inequalities for $d=2$. We use the following modification of $\widehat\lambda$:
    \begin{equation*}
        \includegraphics{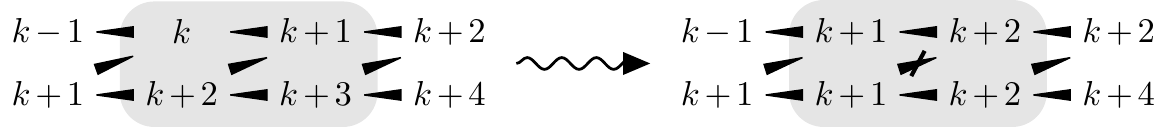}
    \end{equation*}

    The only inequality that remains to be treated is the lower bound condition $\lambda_{d,d}\ge 0$. The upper bound condition then follows from the duality given by $\Phi_{N,d}$. Here we use a modification of $\widehat\lambda$ to construct a point that causes only the lower bound condition to fail. We first do this for $d=2$:
    \begin{equation*}
        \includegraphics{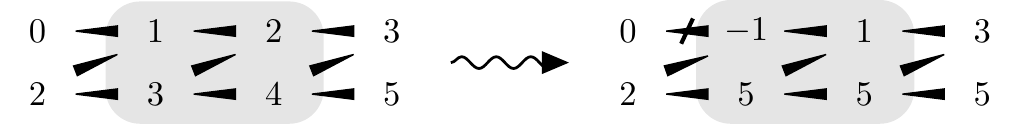}
    \end{equation*}
    This also covers the case $d=N-2$ by dualizing using $\Psi_{N,2}$.
    For $2<d<N-2$ we use a different modification of $\widehat\lambda$:
    \begin{equation*}
        \includegraphics{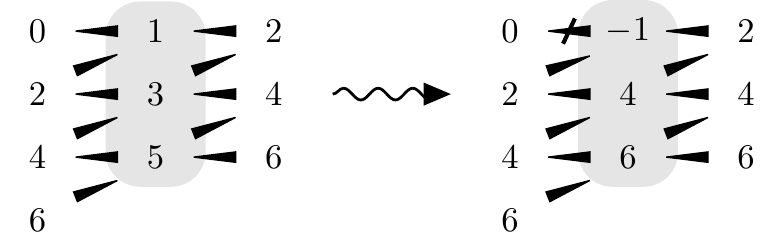}\qedhere
    \end{equation*}
\end{proof}

\begin{example} \label{ex:52}
    For $N=5$ and $d=2$, we construct the points given by \Cref{lem:failineq} explicitly. The special point of $\Lambda_{5,2}$ is
    \begin{equation*}
        \widehat\lambda =
        \begin{pmatrix}
            0 & 0 & 1 & 2 & 3 & 5 \\
            0 & 2 & 3 & 4 & 5 & 5
        \end{pmatrix}.
    \end{equation*}
    The half-spaces described by the non-exceptional inequalities are
        $H_1\colon \lambda_{2,2} \ge 0$,
        $H_2\colon \lambda_{2,2} \le \lambda_{2,3}$,
        $H_3\colon \lambda_{2,3} \le \lambda_{1,2}$,
        $H_4\colon \lambda_{1,2} \le \lambda_{1,3}$ and
        $H_5\colon \lambda_{1,3} \le 5$.
    Applying \Cref{lem:failineq} yields the following five points $P_i$, each satisfying all conditions except lying in the half-space $H_i$:
    \begin{align*}
        P_1 &=
        \begin{pmatrix}
            0 & 0 & -1 & 1 & 3 & 5 \\
            0 & 2 & 5 & 5 & 5 & 5
        \end{pmatrix}, \\
        P_2 &=
        \begin{pmatrix}
            0 & 0 & 2 & 1 & 3 & 5 \\
            0 & 2 & 2 & 5 & 5 & 5
        \end{pmatrix}, \\
        P_3 &=
        \begin{pmatrix}
            0 & 0 & 2 & 3 & 3 & 5 \\
            0 & 2 & 2 & 3 & 5 & 5
        \end{pmatrix}, \\
        P_4 &= \Phi_{5,2}(P_2) =
        \begin{pmatrix}
            0 & 0 & 0 & 3 & 3 & 5 \\
            0 & 2 & 4 & 3 & 5 & 5
        \end{pmatrix}, \\
        P_5 &= \Phi_{5,2}(P_1) =
        \begin{pmatrix}
            0 & 0 & 0 & 0 & 3 & 5 \\
            0 & 2 & 4 & 6 & 5 & 5
        \end{pmatrix}.
    \end{align*}
    The two variables $\lambda_{2,2}$ and $\lambda_{2,3}$ completely parametrize the polytope, since $\lambda_{1,1}=2$, $\lambda_{1,2} = 4-\lambda_{2,2}$, $\lambda_{1,3} = 6-\lambda_{2,3}$ and $\lambda_{2,4}=3$ by the column sum conditions. Hence, we can illustrate the situation in the plane, as done in \Cref{fig:polytope_5_2}.
    \begin{figure}
        \centering
        \includegraphics{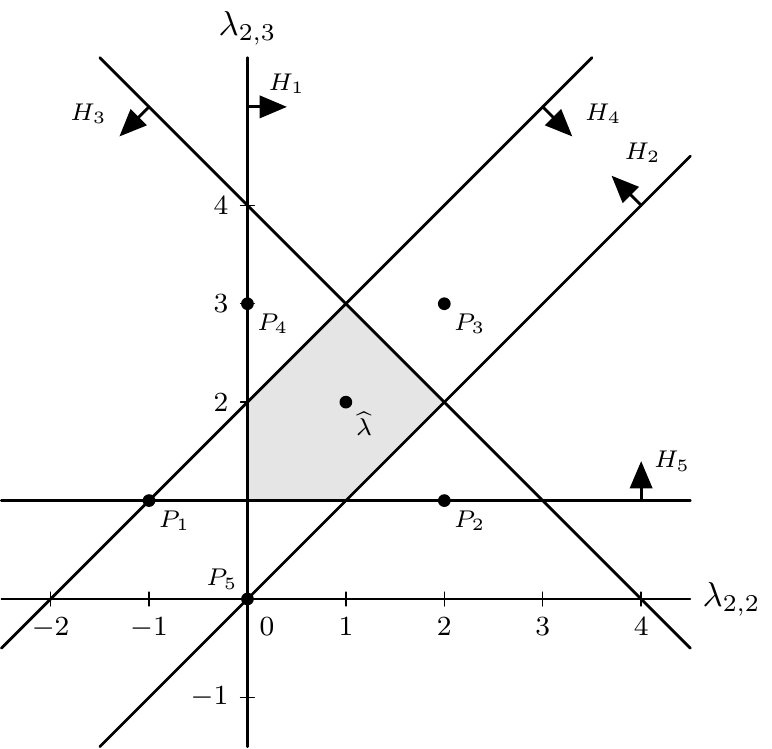}
        \caption{For $\Lambda_{5,2}$ we have five necessary inequalities. The points $P_i$ which satisfy all conditions but the defining inequality for the half-space $H_i$ are constructed in \Cref{ex:52}.}
        \label{fig:polytope_5_2}
    \end{figure}
\end{example}

Using \Cref{lem:failineq}, we prove the following theorem, giving the number of facets of $\Lambda_{N,d}$.

\begin{theorem}
    \label{thm:nfacets}
    For $2\le d\le N-2$ the number of facets of $\Lambda_{N,d}$ is
    \begin{equation*}
        d(N-d-1)+(N-d)(d-1)-2.
    \end{equation*}
\end{theorem}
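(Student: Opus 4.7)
The plan is to apply \Cref{lem:failineq} as the main engine and then show the four exceptional inequalities are redundant via a short chain argument using the symmetry $\Phi_{N,d}$ from \Cref{prop:phi}.

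First I would count the inequalities in \Cref{prop:eigensteps}: there are $d(N-d-1)$ horizontal and $(d-1)(N-d)$ diagonal inequalities, plus the two bound conditions, for a total of $d(N-d-1)+(N-d)(d-1)+2$, four of which are the exceptional inequalities excluded from \Cref{lem:failineq}. For each of the remaining non-exceptional inequalities $I$, the witness point $P_I$ produced by \Cref{lem:failineq} lies in $\aff(\Lambda_{N,d})$ and satisfies every condition of \Cref{prop:eigensteps} except $I$. This simultaneously shows that $I$ is irredundant, hence facet-defining, and that different non-exceptional inequalities describe different facets: if two of them cut out the same half-space of the affine hull, then $P_{I_1}$ would violate $I_2$ as well, contradicting the lemma. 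The non-exceptional inequalities therefore contribute $d(N-d-1)+(N-d)(d-1)-2$ pairwise distinct facets.

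Next I would show that the four exceptional inequalities
\begin{equation*}
    (a)\ \lambda_{2,2}\le\lambda_{1,1},\quad (b)\ \lambda_{1,1}\le\lambda_{1,2},\quad (c)\ \lambda_{d,N-2}\le\lambda_{d,N-1},\quad (d)\ \lambda_{d,N-1}\le\lambda_{d-1,N-2}
\end{equation*}
are all redundant. The involution $\Phi_{N,d}$ from \Cref{prop:phi} swaps $(a)\leftrightarrow(d)$ and $(b)\leftrightarrow(c)$ while permuting the non-exceptional inequalities among themselves, so it suffices to treat (a) and (b). The first column condition together with the column sum at $n=1$ forces $\lambda_{1,1}=d$, and the column sum at $n=2$ gives $\lambda_{1,2}+\lambda_{2,2}=2d$, so both (a) and (b) are equivalent, modulo the equations, to the single inequality $\lambda_{2,2}\le d$. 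For $N\ge 5$ the horizontal inequality $\lambda_{2,2}\le\lambda_{2,3}$ and the diagonal inequality $\lambda_{2,3}\le\lambda_{1,2}$ are both non-exceptional, and chaining them with $\lambda_{1,2}=2d-\lambda_{2,2}$ yields $\lambda_{2,2}\le d$, as required.

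The only case not covered by \Cref{lem:failineq} is $(N,d)=(4,2)$, which I would handle by inspection: the polytope $\Lambda_{4,2}$ is parametrized by $\lambda_{2,2}\in[0,2]$ and is a segment with two facets, matching the formula $2\cdot 1+2\cdot 1-2=2$. Combining the cases gives the claimed facet count. The main obstacle is the redundancy argument for the corner inequalities: the key observation is that modulo the column sum conditions the four exceptional inequalities reduce to just two independent inequalities, each of which can then be derived by a two-step chain of non-exceptional inequalities.
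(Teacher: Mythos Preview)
Your proposal is correct and follows essentially the same route as the paper's proof: count the inequalities of \Cref{prop:eigensteps}, use \Cref{lem:failineq} to certify that each non-exceptional one is facet-defining, show the four corner inequalities are redundant via the column-sum identities $\lambda_{1,1}=d$, $\lambda_{1,2}+\lambda_{2,2}=2d$ and the chain $\lambda_{2,2}\le\lambda_{2,3}\le\lambda_{1,2}$, and treat $(N,d)=(4,2)$ separately. The only cosmetic difference is that you invoke $\Phi_{N,d}$ to reduce the redundancy argument to inequalities (a) and (b), whereas the paper writes out the dual argument for (c) and (d) explicitly.
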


\begin{proof}
    We first show for the case of $N\geq 5$ that $d(N-d-1)+(N-d)(d-1)-2$ inequalities are sufficient to describe $\Lambda_{N,d}$ in its affine hull.

    Let $N\geq 5$, $2\leq d\leq N-2$.
    Counting the horizontal and diagonal inequalities \eqref{ineq:prop1-horiz}, \eqref{ineq:prop1-diag} yields $d(N-d-1) + (d-1)(N-d)$ inequalities.

    We now show that the four inequalities between non-fixed entries that are already mentioned in \Cref{lem:failineq} are in fact not necessary. Recall that these are
    \begin{align}
            \lambda_{2,2}&\leq \lambda_{1,1}, \label{eq:a1}\\
            \lambda_{1,1}&\leq \lambda_{1,2}, \label{eq:a2}\\
            \lambda_{d,N-2}&\leq \lambda_{d,N-1}, \label{eq:a3}\\
            \lambda_{d,N-1}&\leq \lambda_{d-1,N-2}. \label{eq:a4}
    \end{align}
    From the column sum and triangle conditions it follows that $\lambda_{1,1} = d$ and $\lambda_{1,2} + \lambda_{2,2} = 2d$. 
    Thus \eqref{eq:a1} and \eqref{eq:a2} are both equivalent to $\lambda_{2,2}\le d$, which is already implied by $\lambda_{2,2} \le \lambda_{2,3} \le \lambda_{1,2} = 2d-\lambda_{2,2}$, when $d\le N-2$. Therefore \eqref{eq:a1} and \eqref{eq:a2} are superfluous.
    
    Again, by the column sum and triangle conditions, we have $\lambda_{d,N-1} = N-d$ and $\lambda_{d,N-2}+\lambda_{d-1,N-2}=2(N-d)$. Thus \eqref{eq:a3} and \eqref{eq:a4} are both equivalent to $\lambda_{d-1,N-2}\ge N-d$, which is already implied by $\lambda_{d-1,N-2}\ge \lambda_{d-1,N-3} \ge \lambda_{d,N-2} = 2(N-d)-\lambda_{d-1,N-2}$, when $d\le N-2$. However the two arguments are independent only when $N\ge 5$, since for $N=4$, $d=2$ we have $\lambda_{2,2} = \lambda_{d,N-2}$ and $\lambda_{1,2} = \lambda_{d-1,N-2}$.

    Counting all inequalities, including the lower and upper bound conditions, excluding the four superfluous inequalities, we have
    \begin{equation*}
        d(N-d-1) + (d-1)(N-d) + 2 - 4 = d(N-d-1) + (d-1)(N-d) - 2
    \end{equation*}
    inequalities that are sufficient to describe $\Lambda_{N,d}$. From \Cref{lem:failineq} we know that all these inequalities are actually necessary, hence we obtain the desired number of facets.

    For the case $N=4$, $d=2$, we have $\dim(\Lambda_{4,2})=(2-1)(4-2-1)=1$. The only polytope of dimension $1$ is a line segment, the two endpoints being its facets. Thus, $\Lambda_{4,2}$ has two facets, as given by $d(N-d-1)+(d-1)(N-d)-2$ for $N=4$, $d=2$.
\end{proof}

From \Cref{lem:failineq} and \Cref{thm:nfacets} we conclude that removing the four exceptional inequalities from the description of $\Lambda_{N,d}$ in \Cref{prop:eigensteps} yields a non-redundant system of equations and inequalities.

\section{Connections between frame and eigenstep operations}
\label{sec:frame-connections}

Until now we focused on the combinatorics of sequences of eigensteps. In this section, we give descriptions of the affine isomorphisms $\Phi_{N,d}$ and $\Psi_{N,d}$ in terms of the underlying frames.

For this section, we fix the following notations: given a frame $F=(f_n)_{n=1}^N$, let $\lambda_F$ denote the sequence of eigensteps associated to an equal norm tight frame $F$, that is $\lambda_F \coloneqq (\sigma(F_n^{\vphantom{*}} F_n^*))_{n=0}^N$, and let $\widetilde F \coloneqq \left(f_{N-n+1}\right)_{n=1}^N$ denote the frame with reversed order of frame vectors.

We obtain the following result:
\begin{proposition}
    Let $F=(f_n)_{n=1}^N$ be an equal norm tight frame in $\F^d$ with $\|f_n\|^2 = d$, then
    \begin{equation*}
        \Phi_{N,d}(\lambda_F) = \lambda_{\widetilde F}.
    \end{equation*}
\end{proposition}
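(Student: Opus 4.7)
The plan is to compute $\widetilde F_n \widetilde F_n^*$ directly and compare its spectrum with that of an appropriate truncation of $F$, using the tightness condition $FF^* = NI_d$.

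First I would unwind the definitions: by construction $\widetilde F_n$ has columns $f_N, f_{N-1}, \ldots, f_{N-n+1}$, so
\begin{equation*}
 \widetilde F_n^{\vphantom{*}} \widetilde F_n^* = \sum_{k=N-n+1}^N f_k^{\vphantom{*}} f_k^*.
\end{equation*}
Because $F$ is equal norm tight with $\|f_k\|^2 = d$, we have $FF^* = \sum_{k=1}^N f_k^{\vphantom{*}} f_k^* = N\cdot I_d$. Splitting this sum at $N-n$ yields
\begin{equation*}
 \widetilde F_n^{\vphantom{*}} \widetilde F_n^* = N\cdot I_d - \sum_{k=1}^{N-n} f_k^{\vphantom{*}} f_k^* = N\cdot I_d - F_{N-n}^{\vphantom{*}} F_{N-n}^*.
\end{equation*}

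Next I would translate this matrix identity into a statement about spectra. If $\lambda_{1,N-n} \ge \lambda_{2,N-n} \ge \cdots \ge \lambda_{d,N-n}$ are the non-increasingly ordered eigenvalues of $F_{N-n}^{\vphantom{*}} F_{N-n}^*$, then the eigenvalues of $N\cdot I_d - F_{N-n}^{\vphantom{*}} F_{N-n}^*$ are exactly $N - \lambda_{i,N-n}$, but now in non-decreasing order as $i$ runs from $1$ to $d$. Re-indexing so that they appear in non-increasing order, the $i$-th eigenvalue of $\widetilde F_n^{\vphantom{*}} \widetilde F_n^*$ is
\begin{equation*}
 (\lambda_{\widetilde F})_{i,n} = N - \lambda_{d-i+1,\,N-n} = (\lambda_F)_{d-i+1,\,N-n} \text{ subtracted from } N.
\end{equation*}
This is precisely the formula $\Phi_{N,d}(\lambda_F)_{i,n} = N - (\lambda_F)_{d-i+1,N-n}$ from \Cref{prop:phi}, and it also covers the boundary cases $n=0$ and $n=N$ automatically (giving the all-zero and all-$N$ columns).

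The argument is essentially a one-line computation once the tightness is used, so there is no genuine obstacle; the only care needed is bookkeeping the reversal of the ordering of eigenvalues under the map $\mu \mapsto N-\mu$, which is exactly what produces the $d-i+1$ index flip in $\Phi_{N,d}$.
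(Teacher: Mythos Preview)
Your proof is correct and follows essentially the same route as the paper: both split the tight frame operator $FF^*=N\cdot I_d$ into the partial frame operator and its complement, yielding $\widetilde F_n^{\vphantom{*}}\widetilde F_n^* = N\cdot I_d - F_{N-n}^{\vphantom{*}}F_{N-n}^*$, and then read off the spectral relation. You are in fact slightly more explicit than the paper about why the map $\mu\mapsto N-\mu$ produces the index flip $i\mapsto d-i+1$ when passing to non-increasing spectra.
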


\begin{proof}
    Decomposing the frame operator of $F$ we have
    \begin{equation*}
        N\cdot I_d = FF^* = \sum_{k=1}^N f_k^{\vphantom{*}} f_k^* = \sum_{k=1}^n f_k^{\vphantom{*}} f_k^* + \sum_{k=n+1}^N f_k^{\vphantom{*}} f_k^* = F_n^{\vphantom{*}} F_n^* + \widetilde F_{N-n}^{\vphantom{*}} \widetilde F_{N-n}^*.
    \end{equation*}
    Thus, if $v\in\F^d$ is an eigenvector of $F_n^{\vphantom{*}} F_n^*$ with eigenvalue $\gamma$, we obtain
    \begin{equation*}
        \widetilde F_{N-n}^{\vphantom{*}} \widetilde F_{N-n}^* v = (N\cdot I_d-F_n^{\vphantom{*}} F_n^*) v = (N-\gamma) v.
    \end{equation*}
    So $v$ is an eigenvector of $\widetilde F_{N-n}^{\vphantom{*}} \widetilde F_{N-n}^*$ with eigenvalue $N-\gamma$ and $\lambda_{\widetilde F} = \Phi_{N,d}(\lambda_F)$.
\end{proof}

A well-known concept in finite frame theory is the notion of \emph{Naimark complements}. In the case of Parseval frames, finding a Naimark complement of $F$ amounts to finding a matrix $G$ such that $\bigl(\begin{smallmatrix}F\\G\end{smallmatrix}\bigr)$ is unitary. By scaling, this definition can be extended to tight frames and in fact to all finite frames, as discussed in \cite{CFM11b}. In our context, we use the following definition:

\begin{definition}\label{def:naimark}
    Given an equal norm tight frame $F =(f_n)_{n=1}^N$ in $\F^d$ with $\|f_n\|^2 = d$, a frame $G = (g_n)_{n=1}^N$ in $\F^{N-d}$ satisfying $F^*F + G^*G = N\cdot I_N$ is called a \emph{Naimark complement} of $F$.
\end{definition}
Many properties of a frame $F$ carry over to its Naimark complement $G$. In particular, a Naimark complement of an equal norm tight frame is again an equal norm tight frame, the norm being $\sqrt{N-d}$. The following proposition shows how the duality described by $\Psi_{N,d}$ corresponds to taking a Naimark complement and reversing the order of frame vectors.

\begin{proposition}
    Let $F=(f_n)_{n=1}^N$ be an equal norm tight frame in $\F^d$ with norms $\|f_n\|^2 = d$ and $G=(g_n)_{n=1}^N$ a Naimark complement of $F$, then
    \begin{equation*}
        \Psi_{N,d}(\lambda_F) = \lambda_{\widetilde G}.
    \end{equation*}
\end{proposition}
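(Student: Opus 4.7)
The plan is to exploit the truncated Naimark identity and match entries with the formula defining $\Psi_{N,d}$. First I would use that a Naimark complement $G$ of an equal norm tight frame is itself equal norm tight with $\|g_n\|^2=N-d$, as noted in the paragraph after \Cref{def:naimark}. Hence $GG^*=N\cdot I_{N-d}$ and $\lambda_{\widetilde G}$ lies in $\Lambda_{N,N-d}$, so the two triangle conditions for $\lambda_{\widetilde G}$ hold automatically and match the outer two cases in the definition of $\Psi_{N,d}$.

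For $(i,n)$ in the parallelogram $\max(1,n-d+1)\le i\le\min(N-d,n)$ of non-fixed entries of $\lambda_{\widetilde G}$, the identity $GG^*=N\cdot I_{N-d}$ gives
\begin{equation*}
\widetilde G_n^{\vphantom{*}}\widetilde G_n^* = N\cdot I_{N-d} - G_{N-n}^{\vphantom{*}}G_{N-n}^*,
\end{equation*}
so $(\lambda_{\widetilde G})_{i,n}=N-(\lambda_G)_{N-d+1-i,N-n}$. On the other hand, restricting $F^*F+G^*G=N\cdot I_N$ to the first $N-n$ columns yields $F_{N-n}^*F_{N-n}+G_{N-n}^*G_{N-n}=N\cdot I_{N-n}$, and since these commute their eigenvalues in non-increasing order $\alpha_1\ge\dots\ge\alpha_{N-n}$ and $\beta_1\ge\dots\ge\beta_{N-n}$ satisfy $\alpha_j+\beta_{N-n+1-j}=N$. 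A matrix and its adjoint share non-zero singular values, so $\alpha_j=(\lambda_F)_{j,N-n}$ for $1\le j\le\min(d,N-n)$ and analogously $\beta_k=(\lambda_G)_{k,N-n}$ for $1\le k\le\min(N-d,N-n)$. Setting $j=d+i-n$ gives $N-n+1-j=N-d+1-i$, and combining the above yields
\begin{equation*}
(\lambda_{\widetilde G})_{i,n}=N-(\lambda_G)_{N-d+1-i,N-n}=(\lambda_F)_{d+i-n,N-n}=\bigl(\Psi_{N,d}(\lambda_F)\bigr)_{i,n}.
\end{equation*}

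The main technical hurdle I expect is the indexing bookkeeping: I have to check that for $(i,n)$ in the parallelogram of $\Lambda_{N,N-d}$, the shifted index $j=d+i-n$ actually lies in the range $\max(1,d+1-n)\le j\le\min(d,N-n)$, where both the $j$-th eigenvalue of $F_{N-n}^*F_{N-n}$ and the $(N-n+1-j)$-th eigenvalue of $G_{N-n}^*G_{N-n}$ are genuine entries of $\lambda_F$ and $\lambda_G$ rather than padded zeros, so that the simultaneous-diagonalization argument applies cleanly.
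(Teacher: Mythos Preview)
Your proposal is correct and uses the same three ingredients as the paper: the truncated Naimark identity $F_m^*F_m+G_m^*G_m=N\cdot I_m$ on Gram matrices, the fact that $A^*A$ and $AA^*$ share their nonzero spectrum, and the tightness relation $\widetilde G_n\widetilde G_n^*=N\cdot I_{N-d}-G_{N-n}G_{N-n}^*$. The index check you flag as the main hurdle does go through: from $\max(1,n-d+1)\le i\le\min(N-d,n)$ one gets $1\le j=d+i-n\le\min(d,N-n)$ and $1\le N-d+1-i\le\min(N-d,N-n)$, so both $\alpha_j$ and $\beta_{N-n+1-j}$ are indeed genuine eigenstep entries rather than padding zeros.

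The organization differs from the paper's proof. The paper first reduces to $N\ge 2d$ via $\Psi_{N,d}^{-1}=\Psi_{N,N-d}$, then computes the full spectrum $\sigma(\widetilde G_{N-n}\widetilde G_{N-n}^*)$ column by column, splitting into the three regimes $n<d$, $d\le n\le N-d$, and $n>N-d$ (the last handled via the $\Phi$-duality). Your entrywise argument treats all $(i,n)$ in the parallelogram uniformly, absorbing the case distinction into the single index verification. This is more compact; the paper's version is perhaps more transparent about where the padding zeros and padding $N$'s in each column actually come from.
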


\begin{proof}
    Since $\Psi_{N,d}(\lambda_F) = \lambda_{\widetilde G}$ is equivalent to $\lambda_F = \Psi_{N,N-d}(\lambda_{\widetilde G})$, we only need to consider the case $N\ge 2d$.
    We first consider the columns of $F$ with indices $n<d$. Since $F_n$ is an $d\times n$ matrix, $F_n^{\vphantom{*}} F_n^*$ has at most $n$ non-zero eigenvalues. To be precise, the spectrum of the frame operator of $F_n$ is
    \begin{equation*}
        \sigma(F_n^{\vphantom{*}} F_n^*) = (\lambda_{1,n},\dots,\lambda_{n,n},\underbrace{0,\dots,0}_{d-n}).
    \end{equation*}
    In order to obtain the eigensteps of $G$, we switch to Gram matrices. The Gram matrix of $F_n$ is the $n\times n$ matrix $F_n^* F_n^{\vphantom{*}}$, with spectrum
    \begin{equation*}
        \sigma(F_n^* F_n^{\vphantom{*}}) = (\lambda_{1,n},\dots,\lambda_{n,n}),
    \end{equation*}
    which is obtained by considering the singular value decomposition of $F_n$.

    Since $G$ is a Naimark complement of $F$, we have $F^*F + G^*G = N\cdot I_N$. In particular,
    \begin{align*}
        N\cdot I_N &= F^*F + G^*G =
        \begin{pmatrix}
            F^* & G^*
        \end{pmatrix}
        \begin{pmatrix}
            F \\ G
        \end{pmatrix}
        \\&=
        \begin{pmatrix}
            F_n^* & G_n^* \\
            \vdots & \vdots
        \end{pmatrix}
        \begin{pmatrix}
            F_n & \cdots \\
            G_n & \cdots
        \end{pmatrix}
        =
        \begin{pmatrix}
            F_n^* F_n^{\vphantom{*}} + G_n^* G_n^{\vphantom{*}} & \cdots \\
            \vdots & \ddots
        \end{pmatrix}.
    \end{align*}
    The first $n$ rows and columns of this identity yield $F_n^* F_n^{\vphantom{*}} + G_n^* G_n^{\vphantom{*}} = N \cdot I_n$. Therefore
    \begin{equation*}
        \sigma(G_n^* G_n^{\vphantom{*}}) = (N-\lambda_{n,n},\dots,N-\lambda_{1,n}).
    \end{equation*}
    Going back to the frame operator of $G_n$, which is the $(N-d)\times(N-d)$ matrix $G_n^{\vphantom{*}} G_n^*$, we have
    \begin{equation*}
        \sigma(G_n^{\vphantom{*}} G_n^*) = (N-\lambda_{n,n},\dots,N-\lambda_{1,n},\underbrace{0,\dots,0}_{N-d-n}).
    \end{equation*}
    Finally, using $\widetilde G_{N-n}^{\vphantom{*}} \widetilde G_{N-n}^* + G_n^{\vphantom{*}} G_n^* = GG^* = N\cdot I_{N-d}$, we obtain
    \begin{equation*}
        \sigma(\widetilde G_{N-n}^{\vphantom{*}} \widetilde G_{N-n}^*) = (\underbrace{N,\dots,N}_{N-d-n},\lambda_{n,n},\dots,\lambda_{1,n}),
    \end{equation*}
    which shows that the $(N-n)$-th column of $\Psi(\lambda_F)$ is equal to the $(N-n)$-th column of $\lambda_{\widetilde G}$ for $n<d$.

    For $n>N-d$, let $m\coloneqq N-n$ so that $m < d$. Hence, the $(N-m)$-th column of $\Psi(\lambda_{\widetilde F})$ is the $(N-m)$-th column of $\lambda_G$ by the previous argument. Since $\lambda_{\widetilde F} = \Phi_{N,d}(\lambda_F)$ and $\lambda_G = \Phi_{N,N-d}(\lambda_{\widetilde G})$, we know that $\Psi_{N,d}(\Phi_{N,d}(\lambda_F))$ and $\Phi_{N,N-d}(\lambda_{\widetilde G})$ agree in the $n$-th column. Using $\Psi_{N,d}\circ \Phi_{N,d} = \Phi_{N,N-d} \circ \Psi_{N,d}$ and the fact that $\Phi_{N,N-d}$ reverses the column order, we conclude that $\Psi_{N,d}(\lambda_F)$ and $\lambda_{\widetilde G}$ agree in the $(N-n)$-th column as desired.

    We now consider $d\le n \le N-d$. By the same arguments as before, we have
    \begin{align*}
        \sigma(F_n^{\vphantom{*}} F_n^*) &= (\lambda_{1,n},\dots,\lambda_{d,n}), \vphantom{\underbrace{}_{n-d}}\\
        \sigma(F_n^* F_n^{\vphantom{*}}) &= (\lambda_{1,n},\dots,\lambda_{d,n}, \underbrace{0,\dots,0}_{n-d}), \\
        \sigma(G_n^* G_n^{\vphantom{*}}) &= (\underbrace{N,\dots,N}_{n-d},N-\lambda_{d,n},\dots,N-\lambda_{1,n}).
    \end{align*}
    Since $G_n$ is an $(N-d)\times n$ matrix, with $N-d \ge n$, the spectrum of the frame operator of $G_n$ is
    \begin{equation*}
        \sigma(G_n^{\vphantom{*}} G_n^*) = (\underbrace{N,\dots,N}_{n-d},N-\lambda_{d,n},\dots,N-\lambda_{1,n}, \underbrace{0,\dots,0}_{N-d-n}),
    \end{equation*}
    thus
    \begin{equation*}
        \sigma(\widetilde G_{N-n}^{\vphantom{*}} \widetilde G_{N-n}^*) = (\underbrace{N,\dots,N}_{N-d-n},\lambda_{1,n},\dots,\lambda_{d,n}, \underbrace{0,\dots,0}_{n-d}),
    \end{equation*}
    which shows that the $(N-n)$-th column of $\Psi(\lambda_F)$ is equal to the $(N-n)$-th column of $\lambda_{\widetilde G}$ for $d\le n\le N-d$.
\end{proof}

\section{Conclusion and open problems}
\label{sec:conclusion}

As we have seen, in the special case of equal norm tight frames we are able to obtain a general non-redundant description of the polytope of eigensteps in terms of equations and inequalities. However, this description does not generalize to non-tight frames, where we lose the $N$-triangle in the eigenstep tableau. Hence, even the dimension of $\Lambda((\mu_n)_{n=1}^N, (\lambda_i)_{i=1}^d)$ will depend on the multiplicities of eigenvalues in the spectrum that cause smaller triangles of fixed entries in the eigenstep tableaux.

From a discrete geometers point of view, it might be interesting to find a description of polytopes of eigensteps in terms of vertices. However, even restricting to equal norm tight frames, we were not able to calculate the number of vertices of $\Lambda_{N,d}$ in general, let alone find a description of the polytope as a convex hull of vertices. On the frame theoretic end, it might be interesting to study properties of frames $F$ corresponding to certain points of the polytope. For example, interesting classes of equal norm tight frames might be the frames $F$ such that $\lambda_F$ is the special point $\widehat\lambda$, a boundary point of $\Lambda_{N,d}$ or a vertex of $\Lambda_{N,d}$.

\printbibliography
\end{document}